\documentclass[]{article}

\pdfoutput=1

\usepackage{amsfonts}
\usepackage{amsmath}
\usepackage{amsthm}
\usepackage{amssymb}
\usepackage{graphicx}
\usepackage{epstopdf}
\usepackage{verbatim}
\usepackage{float}
\usepackage{tikz}
\usetikzlibrary{matrix,arrows,decorations.pathmorphing}
\usepackage{tikz-cd}
\usepackage[all,cmtip]{xy}

\theoremstyle{plain}
\newtheorem{theorem}{Theorem}[section]
\newtheorem{lemma}[theorem]{Lemma}

\newtheorem{prop}[theorem]{Proposition}

\theoremstyle{definition}

\DeclareMathOperator{\Span}{Span}

\DeclareMathOperator{\SU}{SU}

\DeclareMathOperator{\im}{Im}
\DeclareMathOperator{\Hess}{Hess}

\title{The multi-moment map of the nearly K\"ahler $S^3\times S^3$}
\author{Kael Dixon}

\begin{document}

\maketitle

\begin{abstract}
 We describe the multi-moment map associated to an almost Hermitian manifold which admits an action of a torus by holomorphic isometries. We investigate in particular the case of a $\mathbb T^3$ action on the homogeneous nearly K\"ahler $ S^3\times S^3$. We find that the multi-moment map in this case acts more-or-less similarly to the moment map of a toric manifold, while the more general case does not.
\end{abstract}
\section{Introduction}

An almost Hermitian manifold $(M,g,J)$ is \emph{nearly K\"ahler} if $\nabla^g J$ is skew-symmetric. We say a nearly K\"ahler manifold is \emph{strict} if it is not K\"ahler. The minimum dimension admitting strict nearly K\"ahler manifolds is $6$, and there are only a handful of known examples of compact strictly K\"ahler $6$-manifolds. The homogeneous spaces $\mathbb S^6, \mathbb S^3\times\mathbb S^3,\mathbb {CP}^3$ and $\SU_3/\mathbb T^2$ admit strict nearly K\"ahler structures, and there are no other homogeneous strict nearly K\"ahler $6$-manifolds \cite{butruille2010homogeneous}. The only non-homogeneous examples that are known are cohomogeneity one structures on $\mathbb S^6$ and $\mathbb S^3\times\mathbb S^3$ \cite{foscolo2017new}, and these are conjectured to be the only cohomogeneity one examples.

If one wants to look for higher cohomogeneity examples, one could look for strict nearly K\"ahler $6$-manifolds admitting the action of a 3-torus $\mathbb T^3$ by holomorphic isometries. Of the list of known examples in the previous paragraph, only the homogeneous $\mathbb S^3\times \mathbb S^3$ admits such a symmetry group. The purpose of this paper is to explore this example.

A compact K\"ahler $6$-manifold admitting a $\mathbb T^3$ action of holomorphic isometries would be toric. Such a manifold could be studied with use of the moment map $\mu$, which is a $\mathbb T^3$-equivariant map from the manifold to the dual Lie algebra of the torus, $\mathfrak t^*$. Each fiber of $\mu$ is a $\mathbb T^3$ orbit, and the image of $\mu$ is the polyhedron which is the convex hull of the $\mu$-image of the fixed points of the $\mathbb T^3$ action.

In general, an almost Hermitian $6$-manifold admitting a $\mathbb T^3$ action of holomorphic isometries would not be toric. However, we can study the multi-moment map $\nu$ associated to the $3$-form $d\omega$. This is a $\mathbb T^3$-equivariant map from the manifold to the three dimensional vector space $\Lambda^2\mathfrak t^*$, so one can hope that it will have similar properties to the momentum map of a toric $6$-manifold. We find that multi-moment map of $\mathbb S^3\times\mathbb S^3$ does have some similar properties and some differences with the momentum map of a toric $6$-manifold, while a more generic almost Hermitian manifold can have a rather poorly behaved multi-moment map.

We find that the multi-moment map image $\Delta:=\nu(\mathbb S^3\times\mathbb S^3)$ of $\mathbb S^3\times \mathbb S^3$ is convex and that its boundary $\partial\Delta$ contains the $1$-skeleton of a regular tetrahedron. However, $\Delta$ bulges beyond the faces of the tetrahedron, and $\partial\Delta$ is smooth away from the vertices. Along $\partial\Delta$, each $\nu$-fiber is a $\mathbb T^3$ orbit, but in the interior, each fiber contains two orbits. The following table compares the fiber types for the multi-moment map of $\mathbb S^3\times\mathbb S^3$ to the moment map of a toric $3$-manifold:\newline

\begin{tabular}{c|c|c}
	Fiber of a point in ... & $\mu$ toric $6$-manifold & $\nu$ for nearly K\"ahler $S^3\times\mathbb S^3$
	\\\hline
	a vertex & \{point\} & $\mathbb T^2$ \\
	an edge	& $\mathbb S^1$ & $\mathbb T^3$ \\
	a face & $\mathbb T^2$ & $\mathbb T^3$ \\
	the interior & $\mathbb T^3$ & $\mathbb T^3 \coprod\mathbb T^3$ 
\end{tabular}

\section{Torus actions on almost Hermitian structures}
Let $(M,g,J,\omega)$ be an almost Hermitian manifold. Let $\mathbb T$ be a torus acting on $M$ by holomorphic isometries. Any vector $X\in\mathfrak t$ induces a vector field $K_X$ on $M$, which is a holomorphic Killing vector field. This means that $\mathcal L_{K_X}g=0=\mathcal L_{K_X}J$. By the Leibniz rule, this implies that $\mathcal L_{K_X}\omega=0$.

If $(M,g,J,\omega)$ is K\"ahler, so that $\omega$ is closed, then there exists a moment map $\mu:M\to\mathfrak t^*$ defined by
$$\langle d\mu,X\rangle=-K_X\lrcorner\,\omega,$$
where $\langle\cdot,\cdot\rangle$ is the natural pairing of $\mathfrak t$ and $\mathfrak t^*$.

If we do not require $(M,g,J,\omega)$ to be K\"ahler, there is a multi-moment map associated to the closed $3$-form $d\omega$ \cite{madsen2013closed}. This is the map $\nu:M\to \Lambda^2\mathfrak t^*$ defined by 
$$\left\langle d\nu,\sum_i X_i\wedge Y_i\right\rangle = -\sum_i K_{X_i}\lrcorner \left(K_{Y_i}\lrcorner\, d\omega\right),\quad \forall \sum_{i}X_i\wedge Y_i\in\Lambda^2\mathfrak t,$$
where here $\langle\cdot,\cdot\rangle$ is the natural pairing of $\Lambda^2\mathfrak t$ and $\Lambda^2\mathfrak t^*$. Recall that the Lie derivative acts on differential forms by $$\mathcal L_X\tau= d(X\lrcorner\,\tau)+X\lrcorner\, d\tau.$$
We can use this to simplify our expression for the multimoment map $\nu$:
\begin{align*}
	\left\langle d\nu,\sum_i X_i\wedge Y_i\right\rangle 
	=& -\sum_i K_{X_i}\lrcorner 
	\left(K_{Y_i}\lrcorner\, d\omega\right)
	= -\sum_i K_{X_i}\lrcorner \left(\mathcal L_{K_{Y_i}}\omega-d\left(K_{Y_i}\lrcorner\,\omega\right)\right)\\
	=& \sum_i K_{X_i}\lrcorner\, d\left(K_{Y_i}\lrcorner\,\omega\right)
	= \sum_i \mathcal L_{K_{X_i}}\left(K_{Y_i}\lrcorner\,\omega\right)-d\left(K_{X_i}\lrcorner\, K_{Y_i}\lrcorner\,\omega\right) \\
	=& \sum_i d\omega(K_{X_i},K_{Y_i}).
\end{align*}
Here we've used the fact that $\mathcal L_{K_X}\omega=0=[K_X,K_Y]$ and the Leibniz rule to get $\mathcal L_{K_{X_i}}\left(K_{Y_i}\lrcorner\,\omega\right)=0$.  This equation can be integrated to solve for $\nu$: 
\begin{align*}\label{eqnMultoMomHerm}
\nu\left(\sum_i X_i\wedge Y_i\right)
= \sum_i \omega(K_{X_i},K_{Y_i}) + C
\end{align*}
for some constant $C$. Note that we can always choose $C$ to be $0$, so we will.

Note that one cannot expect $\nu$ to behave well for an arbitrary Hermitian structure. Motivated by the behaviour of the moment map of toric manifolds, one could expect that $\nu$ is almost everywhere a submersion, which means that the (multi-)moment map locally separates orbits. The following proposition shows that this condition does not always hold:
\begin{prop}
	Let $(M,g,J)$ be an almost Hermitian manifold equipped with a torus $\mathbb T$ acting by holomorphic isometries. Then there exists a metric $\hat g$ related to $g$ by a $\mathbb T$-invariant conformal factor such that the multimoment map $\hat\nu$ of $(M,\hat g, J, \mathbb T)$ is not a submersion on some open set in $M$.
\end{prop}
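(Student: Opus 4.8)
The plan is to use the explicit formula $\nu(\sum_i X_i\wedge Y_i)=\sum_i\omega(K_{X_i},K_{Y_i})$ established above and to track how it transforms under a $\mathbb T$-invariant conformal change of metric. Write $\hat g=e^{2f}g$ with $f$ a smooth $\mathbb T$-invariant function; then the fundamental $2$-form scales as $\hat\omega(\cdot,\cdot)=\hat g(J\cdot,\cdot)=e^{2f}\omega$. First I would confirm that the two facts used to simplify the defining equation of $\nu$ survive the rescaling. Since $f$ is $\mathbb T$-invariant we have $K_X(f)=0$, whence $\mathcal L_{K_X}\hat\omega=e^{2f}\mathcal L_{K_X}\omega+2(K_Xf)e^{2f}\omega=0$, while $[K_X,K_Y]=0$ is unchanged. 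Running the identical computation then gives $\hat\nu(\sum_i X_i\wedge Y_i)=\sum_i\hat\omega(K_{X_i},K_{Y_i})=e^{2f}\nu(\sum_i X_i\wedge Y_i)$, so that as a $\Lambda^2\mathfrak t^*$-valued map one simply has $\hat\nu=e^{2f}\nu$.

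The conceptual heart of the argument is that the conformal factor acts on the vector-valued map $\nu$ by scalar multiplication, and that $\nu$ is itself $\mathbb T$-invariant (its equivariance is for the coadjoint action of $\mathbb T$, which is trivial since $\mathbb T$ is abelian; alternatively this is immediate from the displayed formula, using $t^*\omega=\omega$ and invariance of the $K_X$ under the flow). Fixing an inner product $|\cdot|$ on $\Lambda^2\mathfrak t^*$, I would then set $e^{2f}=|\nu|^{-1}$ on the open set $U=\{p:\nu(p)\neq0\}$, a smooth positive $\mathbb T$-invariant function there. With this choice $|\hat\nu|=|\nu|^{-1}|\nu|\equiv1$, so $\hat\nu$ maps $U$ into the unit sphere $S\subset\Lambda^2\mathfrak t^*$. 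As $\dim S=\dim\Lambda^2\mathfrak t^*-1$, the image of $d\hat\nu_p$ lies in the hyperplane $T_{\hat\nu(p)}S$ and cannot be all of $\Lambda^2\mathfrak t^*$; hence $\hat\nu$ is nowhere a submersion on $U$.

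To obtain a globally defined metric I would argue as follows. If $\nu\equiv0$ then $\hat\nu\equiv0$ for every conformal factor, which is not a submersion onto the nonzero space $\Lambda^2\mathfrak t^*$, so $\hat g=g$ works. Otherwise $U\neq\emptyset$; choosing a nonempty $\mathbb T$-invariant open $W$ with $\overline W\subset U$ and a $\mathbb T$-invariant cutoff $\chi$ that equals $1$ on $W$ and is supported in $U$, I would take the global conformal factor to be $\chi\,|\nu|^{-1}+(1-\chi)$. This is smooth, strictly positive and $\mathbb T$-invariant on all of $M$ and coincides with $|\nu|^{-1}$ on $W$, so the preceding paragraph shows $\hat\nu$ fails to be a submersion on the open set $W$. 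Throughout one assumes $\Lambda^2\mathfrak t^*\neq0$, i.e. $\dim\mathbb T\geq2$, as holds for the $\mathbb T^3$ of interest; otherwise the statement is vacuous.

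The step I expect to require the most care is the first: verifying that the clean formula for $\hat\nu$ genuinely persists after the conformal change — in particular that the constant of integration may again be taken to vanish — since this reduction to $\hat\nu=e^{2f}\nu$ is exactly what converts the problem into the elementary geometric fact that a map normalized to have constant length cannot be a submersion. The globalization by a cutoff and the degenerate case $\nu\equiv0$ are then routine.
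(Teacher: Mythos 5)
Your proposal is correct and follows essentially the same route as the paper: both exploit that a $\mathbb T$-invariant conformal factor rescales $\nu$ pointwise, choose that factor to be $\|\nu\|^{-1}$ near a point where $\nu\neq 0$ so that $\hat\nu$ lands in the unit sphere of $\Lambda^2\mathfrak t^*$ and hence cannot be a submersion there, and handle $\nu\equiv 0$ trivially. Your additional checks (that the derivation of the explicit formula survives the conformal change, and the cutoff globalization) only make explicit what the paper leaves implicit.
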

\begin{proof}	
	If $\nu(M)=\{0\}$, then $\hat g = g$ satisfies the claimed property. Otherwise, there exists some $p_0\in M$ with $\nu(p_0)\neq 0$.
	We can choose a smooth $\mathbb T$-invariant function $\phi$ so that $\phi(p)=-\log\|\nu(p)\|$ for all $p$ in some neighbourhood $U$ of $p_0$. 
	Consider the conformally related metric $\hat g = e^\phi g$. The multi-moment map with respect to the conformally related K\"ahler form $\hat \omega=e^\phi\omega$ is $\hat\nu= e^\phi \nu$.  We chose $\phi$ so that $\hat\nu$ maps $U$ into the unit sphere in $\Lambda^2\mathfrak t^*$, so that $\hat \nu$ is not a submersion on $U$.
\end{proof}

In the rest of the paper, we will describe the multi-moment map for a torus action on the homogeneous nearly K\"ahler $\mathbb S^3\times \mathbb S^3$. We find that $\nu$ is a submersion near generic orbits, and show other similarities and differences to the moment map of a toric manifold.

\section{Homogenous nearly K\"ahler $\mathbb S^3\times\mathbb S^3$}

We begin by reviewing the definition of the homogenous nearly K\"ahler structure on $\mathbb S^3\times\mathbb S^3$, following the work in \cite{dioos2016lagrangian}.

We identify $\mathbb S^3$ with the unit sphere in the quaternions $\mathbb H$. For any $p\in\mathbb S^3$, $T_p\mathbb S^3\subset T_p\mathbb H$ is the image of $T_1\mathbb S^3$ by the pushforward of left-multiplication by $p$. Identifying $T_p\mathbb S^3\subset T_p\mathbb H$ with $p^\perp\subset\mathbb H$, this pushforward is simply quaternionic multiplication by $p$.
Thus the basis $\{i,j,-k\}$ of $\im\mathbb H$ which is identified with $T_1\mathbb S^3$ gives a frame for $T_{(p,q)}\mathbb S^3\times\mathbb S^3=T_p\mathbb S^3\oplus T_q\mathbb S^3:$
\begin{align*}
 E_1(p,q) = (pi,0), \qquad & F_1(p,q) = (0,qi),\\
 E_2(p,q) = (pj,0), \qquad & F_2(p,q) = (0,qj),\\
 E_3(p,q) = (-pk,0), \qquad & F_3(p,q) = (0,-qk),
\end{align*}
where $i,j,k$ are imaginary quaternions satisfying $ij=k$.

The almost complex structure for the homogenous nearly K\"ahler $\mathbb S^3\times \mathbb S^3$ is given in this frame by
$$J = \frac1{\sqrt 3}\sum_{n=1}^3\left(-E_n\otimes E^n+F_n\otimes F^n + 2 F_n\otimes E^n - 2 E_n\otimes F^n\right).$$
The metric $g$ is given by the average of $g_{\mathbb H^2}$ and $g_{\mathbb H^2}(J\cdot, J\cdot)$, where $$g_{\mathbb H^2}=\sum_{n=1}^3\left(\left(E^n\right)^2+\left(F^n\right)^2\right)$$ is the flat metric from $\mathbb H^2$ restricted to $\mathbb S^3\times\mathbb S^3$. This gives
\begin{align*}
g=& \frac43\sum_{n=1}^3\left((E^n)^2-E^nF^n+(F^n)^2\right),\\
\omega =& \frac4{\sqrt 3}\sum_{n=1}^3 E^n\wedge F^n.
\end{align*}

\subsection{Torus actions}

For unit quaternions $a,b,c\in\mathbb S^3$, the map $$F_{a,b,c}:\mathbb S^3\times\mathbb S^3\to\mathbb S^3\times \mathbb S^3:(p,q)\mapsto(apc^{-1},bqc^{-1})$$ is a holomorphic isometry \cite{podesta2010six}.

\begin{lemma}
	The map $$F:\mathbb S^3\times\mathbb S^3\times\mathbb S^3\to {Aut}(\mathbb S^3\times\mathbb S^3,J)\cap{Isom}(\mathbb S^3\times\mathbb S^3,g):(a,b,c)\mapsto F_{a,b,c}$$ is an injective homomorphism.
\end{lemma}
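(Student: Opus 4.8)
The plan is to separate the claim into its two genuinely independent parts: that $F$ is a group homomorphism and that it is injective. The assertion that the image lands in $\mathrm{Aut}(\mathbb S^3\times\mathbb S^3,J)\cap\mathrm{Isom}(\mathbb S^3\times\mathbb S^3,g)$ requires nothing new, since the cited fact that each $F_{a,b,c}$ is a holomorphic isometry already guarantees it. I would equip the domain $\mathbb S^3\times\mathbb S^3\times\mathbb S^3$ with the componentwise quaternionic product as its group structure.

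For the homomorphism property the only step is a direct composition, which I do not expect to cause trouble. Writing $F_{a',b',c'}(p,q)=(a'pc'^{-1},b'qc'^{-1})$ and applying $F_{a,b,c}$ yields $(aa'p\,c'^{-1}c^{-1},\,bb'q\,c'^{-1}c^{-1})$, and since $c'^{-1}c^{-1}=(cc')^{-1}$ this equals $F_{aa',bb',cc'}(p,q)$. Hence $F_{a,b,c}\circ F_{a',b',c'}=F_{(a,b,c)(a',b',c')}$, which is routine.

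The substantive part is injectivity, which I would approach by computing the kernel directly. Suppose $F_{a,b,c}=\mathrm{Id}$, so that $apc^{-1}=p$ and $bqc^{-1}=q$ for all $p,q\in\mathbb S^3$. Evaluating the first identity at $p=1$ gives $a=c$, whereupon it reads $cpc^{-1}=p$ for all $p$; this says $c$ lies in the center of the group $\mathbb S^3$, and the standard fact that this center is $\{\pm1\}$ forces $c=\pm1$, and similarly $b=c$. \emph{This is precisely where I expect the main obstacle to lie}: because $-1$ is central in $\mathbb H$, the element $(-1,-1,-1)$ also acts as the identity, so the computation produces $\ker F=\{(1,1,1),(-1,-1,-1)\}\cong\mathbb{Z}_2$ rather than a trivial kernel. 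Thus $F$ is not literally injective, and the clean statement is that $F$ descends to an injective homomorphism from the quotient $(\mathbb S^3)^3/\mathbb{Z}_2$ onto its image; at the very least one recovers injectivity at the level of Lie algebras, which is all that the subsequent torus-action analysis actually needs. I would therefore either restate the lemma with this $\mathbb{Z}_2$ kernel made explicit or interpret ``injective'' in that effective sense, and reconciling the stated injectivity with the centrality of $-1$ is the key point to address.
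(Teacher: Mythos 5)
Your argument follows the same route as the paper's proof: evaluate at $(1,1)$ to force $a=b=c$, then observe that the residual condition $apa^{-1}=p$ for all $p$ places $a$ in the center of $\mathbb S^3$. The difference is in the last step, and there you are right and the paper is wrong. The paper's proof asserts that ``$\mathbb S^3$ has a trivial center,'' but the group of unit quaternions is $\SU_2$, whose center is $\{\pm1\}$ (it is the adjoint quotient $\SO_3\cong\mathbb S^3/\{\pm1\}$ that is centerless). One checks directly that $F_{-1,-1,-1}(p,q)=\big((-1)p(-1),(-1)q(-1)\big)=(p,q)$, so $\ker F=\{(1,1,1),(-1,-1,-1)\}\cong\mathbb Z_2$ exactly as you computed, and the lemma as stated is false. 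Your proposed repair --- restating the lemma as an injection of $(\mathbb S^3)^3/\mathbb Z_2$, or noting that injectivity holds at the Lie algebra level --- is the correct fix, and it is also all that the rest of the paper uses: the subsequent analysis only needs the induced Killing fields $K_1,K_2,K_3$ and the effective torus action, neither of which is affected by a finite central kernel.
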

\begin{proof}
	It is clear from the definition of $F$ that $F_{a,b,c}\circ F_{a',b',c'}= F_{aa',bb',cc'}$, so that $F$ is a homomorphism. To see that $F$ is injective, let $F_{a,b,c}=Id$. Then $$(1,1)=F_{a,b,c}(1,1)=(ac^{-1},ab^{-1}),$$ so that $a=b=c$.
	For any $(p,q)\in\mathbb S^3\times\mathbb S^3$, $$(p,q)=F_{a,a,a}(p,q)=(apa^{-1},aqa^{-1}).$$
	Since this is true for all $p,q\in\mathbb S^3$, we find that $a$ lies in the center of $\mathbb S^3$. Since $\mathbb S^3$ has a trivial center, $a=1$ as required.
\end{proof}

Since the projection of $\mathbb S^3\times\mathbb S^3\times\mathbb S^3$ onto any of its factors is a homomorphism, any abelian subgroup of $\mathbb S^3\times\mathbb S^3\times\mathbb S^3$ must be a product of abelian subgroups of $\mathbb S^3$. But every non-trivial abelian subgroup of $\mathbb S^3$ is of the form $\left\{e^{At}\right\}_{t\in\mathbb R}$ for some unit imaginary quaternion $A$. Thus, a maximal torus in $\mathbb S^3\times\mathbb S^3\times\mathbb S^3$ is of the form 
$$\{(e^{At_1},e^{Bt_2},e^{Ct_3})\}_{(t_1,t_2,t_3)\in\mathbb R^3},$$
for some $A,B,C\in\mathbb S^2$, identifying $\mathbb S^2$ with the unit imaginary quaternions. A routine computation shows that the image of such a torus under $F$ is generated by the Killing vector fields
\begin{align*}
	K_1 =& (Ap,0),\\
	K_2 =& (0,Bq), \\
	K_3 =& (-pC,-qC).
\end{align*}
Since $p\in\mathbb S^3$, $Ap=p\bar pAp$, so that these Killing vector fields can be written in terms of the frame $(E_1,E_2,E_3,F_1,F_2,F_3)$ as 
\begin{align*}
	K_1 =& \big((\bar pAp)\cdot i\big) E_1 + \big((\bar pAp)\cdot j\big) E_2 - \big((\bar pAp)\cdot k\big) E_3, \\
	K_2 =& \big((\bar qBq)\cdot i\big) F_1 + \big((\bar qBq)\cdot j\big) F_2 - \big((\bar qBq)\cdot k\big) F_3, \\
	K_3 =& (C\cdot i) (E_1+F_1) + (C\cdot j) (E_2+F_2) - (C\cdot k) (E_3+F_3),
\end{align*}
where $\cdot$ is the dot product on $\mathbb H$. This allows us to compute
\begin{align*}
	\frac{\sqrt 3}4\omega(K_1,K_2) =& (\bar pAp)\cdot(\bar qBq), \\
	\frac{\sqrt 3}4\omega(K_1,K_3) =& (\bar pAp)\cdot C, \\
	\frac{\sqrt 3}4\omega(K_2,K_3) =& (\bar qBq)\cdot C.
\end{align*}
Choosing the basis $\left\{\frac{\sqrt 3}4(K_n\wedge K_m)^*\right\}_{1\leq n<m\leq 3}$ for $\Lambda^2\mathfrak t^*$, this allows us to write the multi-moment map as
$$\nu(p,q)=\big((\bar pAp)\cdot(\bar qBq),(\bar pAp)\cdot C,(\bar qBq)\cdot C\big).$$

Note that $\nu^{-1}(0)$ is the union of the Lagrangian torus orbits. The example of a Lagrangian torus in \cite{dioos2016lagrangian} can be found with the values $A=B=i$ and $C=j$.

\subsection{Behaviour of the multi-moment map}

We will first describe the image of the multi-moment map $\nu$. Then we will describe the structure of its fibers.

For $X\in\mathbb S^2$, let us define a map
$$\pi_X:\mathbb S^3\to\mathbb S^2:p\mapsto \bar p X p.$$
When $X=i$, this is the usual Hopf fibration. For general $X$, $\pi_X$ also identifies $\mathbb S^3$ as a $\mathbb S^1$ bundle over $\mathbb S^2$.

Define a function 
$$\bar\nu:\mathbb S^2\times\mathbb S^2\to\Lambda^2\mathfrak t^*:(x,y)\mapsto (x\cdot y, x\cdot C, y\cdot C),$$
so that $\nu=\bar{\nu}\circ(\pi_A\times\pi_B).$ Let $\Delta=\nu(\mathbb S^3\times\mathbb S^3)=\bar \nu(\mathbb S^2\times \mathbb S^2)$ with interior $\mathring \Delta$.

\begin{lemma}\label{lemDeltaDiffGraphs}
	$\Delta=\bigg\{(X,Y,Z):X\in\big[f_-(Y,Z),f_+(Y,Z)\big]\bigg\}$, where $$f_\pm(Y,Z)=YZ\pm\sqrt{1-Y^2}\sqrt{1-Z^2}.$$
\end{lemma}
\begin{proof}
	Let $C^\perp\leq \im\mathbb H$ be the plane orthogonal to $C$. Use the orthogonal decomposition $\im\mathbb H = \mathbb R C \oplus C^\perp$ to write any $(x,y)\in\mathbb S^2\times\mathbb S^2$ as
	$$x=(x\cdot C) C + x^\perp, \quad y=(y\cdot C) C + y^\perp, \quad x^\perp,y^\perp\in C^\perp.$$
	Then we have the following relations:
	\begin{align*}
	 1 &= x\cdot x = (x\cdot C)^2 + \|x^\perp\|^2, \\
	 1 &= y\cdot y = (y\cdot C)^2 + \|y^\perp\|^2, \\
	 x\cdot y &= (x\cdot C)(y\cdot C) + x^\perp\cdot y^\perp.
	\end{align*}
	If $\bar \nu (x,y) = (X,Y,Z)$, then 
	$$X = x\cdot y = YZ +x^\perp\cdot y^\perp.$$
	By the Cauchy-Schwarz inequality, $|x^\perp\cdot y^\perp|\leq \|x^\perp\|\|y^\perp\|=\sqrt{1-X^2}\sqrt{1-Y^2},$ so that $f_-(Y,Z)\leq X \leq f_+(Y,Z)$. It is clear that by varying $x$ and $y$, any value of $X$ in this range can be attained, proving the claimed result.
\end{proof}

\begin{lemma}
	$\Delta$ is convex.
\end{lemma}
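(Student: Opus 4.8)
The plan is to read off from Lemma~\ref{lemDeltaDiffGraphs} that $\Delta$ is a \emph{slab} region and to reduce its convexity to a concavity statement for a single function. The coordinates $(Y,Z)$ range over the square $Q=[-1,1]^2$, which is the natural domain of $f_\pm$ and is exactly the set of attainable pairs $(x\cdot C,\,y\cdot C)$ for $x,y\in\mathbb S^2$; this set is convex. Over $Q$ the region $\Delta$ is bounded below by the graph of $f_-$ and above by the graph of $f_+$. I would first record the elementary fact that such a region is convex as soon as $Q$ is convex, $f_-$ is convex, and $f_+$ is concave: given two points of $\Delta$ and $t\in[0,1]$, the $(Y,Z)$-part of the convex combination stays in $Q$, and convexity of $f_-$ together with concavity of $f_+$ force the interpolated $X$-value to remain between $f_-$ and $f_+$ evaluated at the new base point.

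Next I would exploit the symmetry $f_-(Y,Z)=-f_+(-Y,Z)$, which is visible directly from the formulas. Since precomposing with the linear map $Y\mapsto-Y$ preserves the classes of convex and of concave functions, while negation interchanges them, the convexity of $f_-$ is equivalent to the concavity of $f_+$. Hence it suffices to prove that $f_+$ is concave on $Q$.

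For this I would show that the Hessian of $f_+$ is negative semidefinite on the open square $(-1,1)^2$, which gives concavity there and then on all of $Q$ by continuity. Writing $u=\sqrt{1-Y^2}$ and $v=\sqrt{1-Z^2}$, a short computation gives $\partial_{YY}f_+=-v/u^3$, $\partial_{ZZ}f_+=-u/v^3$ and $\partial_{YZ}f_+=1+YZ/(uv)$, so the diagonal entries are negative and the determinant simplifies to $\det\Hess f_+=\big(1-(YZ+uv)^2\big)/(u^2v^2)=\big(1-f_+(Y,Z)^2\big)/\big((1-Y^2)(1-Z^2)\big)$. The main point, and the only real obstacle, is the sign of the numerator; but $YZ+uv=f_+(Y,Z)$ is precisely the inner product of the unit vectors $(Y,u)$ and $(Z,v)$ in $\mathbb R^2$, so Cauchy--Schwarz gives $|f_+|\le 1$ and hence $\det\Hess f_+\ge 0$. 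Together with the negativity of $\partial_{YY}f_+$ this shows that $\Hess f_+$ is negative semidefinite, completing the argument.
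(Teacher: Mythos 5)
Your proof is correct, and it follows the same basic strategy as the paper: use Lemma~\ref{lemDeltaDiffGraphs} to realize $\Delta$ as the region between the graphs of $f_-$ and $f_+$ over $[-1,1]^2$, and then establish convexity of $f_-$ and concavity of $f_+$ by a Hessian computation. The noteworthy difference is that your version is more complete at the one point where the paper's argument is thin: the paper only records that $\det\Hess(f_\pm)\geq 0$ (writing it as a perfect square, which is the same identity as your $\det\Hess f_+=(1-f_+^2)/\bigl((1-Y^2)(1-Z^2)\bigr)$ combined with $1-f_+^2=(Yv-Zu)^2$), but a nonnegative Hessian determinant alone does not distinguish a convex function from a concave one. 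You supply the missing sign check $\partial_{YY}f_+=-v/u^3<0$ on the open square, which together with the determinant gives negative semidefiniteness, and you pass to the closed square by continuity. You also make explicit two steps the paper leaves implicit: that the slab between a convex floor and a concave ceiling over a convex base is convex (as an intersection of an epigraph and a hypograph), and that the symmetry $f_-(Y,Z)=-f_+(-Y,Z)$ reduces the two required statements to one. None of this changes the route, but it does close a genuine gap in the published argument rather than merely reproducing it.
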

\begin{proof}
	By the previous lemma, it suffices to prove that $\mp f_\pm$ is a convex function. This follows from the computation $$\det\circ\Hess(f_\pm)=\left(\frac{X\sqrt{1-Y^2}+Y\sqrt{1-X^2}}{\sqrt{1-X^2}\sqrt{1-Y^2}}\right)^2\geq 0.$$
\end{proof}

\begin{prop}
	$\partial\Delta$ is contained in the affine variety $0=F(X,Y,Z)=2XYZ-X^2-Y^2-Z^2+1$. The set of singular points of $\partial\Delta$ is
	$$V:=\big\{(1,1,1),(1,-1,-1),(-1,1,-1),(-1,-1,1)\big\}.$$
\end{prop}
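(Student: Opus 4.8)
The plan is to read off the boundary directly from Lemma~\ref{lemDeltaDiffGraphs} and then study it as a piece of the real affine variety $\{F=0\}$. First, for the containment: by Lemma~\ref{lemDeltaDiffGraphs}, $\Delta$ is the solid region lying between the two graphs $X=f_\pm(Y,Z)$ over the square $(Y,Z)\in[-1,1]^2$, so every boundary point satisfies $X=f_+(Y,Z)$ or $X=f_-(Y,Z)$; equivalently $X-YZ=\pm\sqrt{1-Y^2}\sqrt{1-Z^2}$. Squaring gives $(X-YZ)^2=(1-Y^2)(1-Z^2)$, and expanding both sides and rearranging yields exactly $F(X,Y,Z)=2XYZ-X^2-Y^2-Z^2+1=0$. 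Hence $\partial\Delta\subseteq\{F=0\}$.

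Next I would locate the singular points of the variety $\{F=0\}$ by setting $\nabla F=0$. Since $\nabla F=2\,(YZ-X,\ XZ-Y,\ XY-Z)$, the critical equations are $X=YZ$, $Y=XZ$, $Z=XY$. Substituting the first into the second gives $Y(1-Z^2)=0$, and similarly for the other pairs; discarding the solution $(0,0,0)$ (where $F=1\neq0$) leaves precisely the four points with $|X|=|Y|=|Z|=1$ compatible with the sign relations, namely the set $V$. Thus $V$ is exactly the locus where the defining equation is singular.

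It then remains to match the singular locus of $\partial\Delta$ with $V$, which splits into showing smoothness off $V$ and genuine singularity on $V$. At a boundary point where $\nabla F\neq0$ the implicit function theorem makes $\{F=0\}$ a smooth surface, and I would check that $\partial\Delta$ fills it out locally: in the open square $|Y|,|Z|<1$ the two graphs are disjoint and each smooth, while along an edge (say $|Y|=1$, $|Z|<1$) the two graphs merge and one solves $F=0$ for $Y$ instead, getting the single smooth graph $Y=XZ+\sqrt{(1-X^2)(1-Z^2)}$; the elementary inequality $XZ+\sqrt{(1-X^2)(1-Z^2)}=\cos(\alpha-\beta)\le1$ (with $X=\cos\alpha$, $Z=\cos\beta$) shows this graph stays in the square and coincides with $\partial\Delta$, so there is no crease. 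At a point of $V$, say $(1,1,1)$, I would expand $F$: the linear term vanishes and the quadratic (Hessian) part has matrix $2(J-2I)$ with eigenvalues $2,-4,-4$, hence is nondegenerate of signature $(1,2)$. So the tangent cone is a genuine quadratic cone, not a plane; concretely, setting $Y=1-u^2$, $Z=1-v^2$ gives the two boundary graphs $X=1-(u\mp v)^2$, whose tangent directions at the origin span $\mathbb R^3$, so no tangent plane exists and $\partial\Delta$ has a conical point there. By the sign symmetry of $F$ the same holds at the remaining three points.

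The main obstacle I expect is the smooth--versus--singular dichotomy along the square's rim: the inclusion $\partial\Delta\subseteq\{F=0\}$ runs only one way, so I must verify separately that the two graphs glue \emph{smoothly} along the edges (where exactly one of $|Y|,|Z|$ equals $1$) yet form a \emph{cone} at the corners. The decisive point is how the quadrant restriction $|Y|,|Z|\le1$ cuts the variety's tangent cone — into a half-plane along edges, giving smoothness, but into a proper two-dimensional cone at the four corners of $V$, giving the singularities.
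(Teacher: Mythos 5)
Your proof is correct and follows essentially the same route as the paper: square the relation $X-YZ=\pm\sqrt{1-Y^2}\sqrt{1-Z^2}$ to obtain $F=0$, then observe that $\nabla F$ vanishes only at the origin (where $F=1\neq 0$) and on $V$. The additional work you do --- verifying that the two graphs glue smoothly along the tetrahedron's edges where $f_+$ and $f_-$ merge, and that the tangent cone at each point of $V$ is a genuine nondegenerate quadric cone so that $\partial\Delta$ is actually singular there --- does not appear in the paper, which simply identifies the singular set of $\partial\Delta$ with the singular locus of the variety; your version is, if anything, more complete on exactly the points the paper leaves implicit.
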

\begin{proof}
	By lemma \ref{lemDeltaDiffGraphs}, $\partial\Delta$ is given by points $(X,Y,Z)\in\mathbb R^3$ with $X=f_\pm(Y,Z)$. Such a points satisfy the relation $X^2 = 2XYZ+(1-Y^2)(1-Z^2)-Y^2Z^2$, which can be rearranged to form $F(X,Y,Z)=0$.
	
	The singular points of $\partial\Delta$ are the points where $F$ and $\nabla F$ both vanish. The set of points where $\nabla F$ vanishes are $\{(0,0,0)\}\cup V$. The result follows since $F(0,0,0)=1\neq 0$, while $F$ vanishes on $V$.
\end{proof}

\begin{prop}
	The line segment between any two points in $V$ lies in $\partial\Delta$.
\end{prop}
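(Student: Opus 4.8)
The plan is to combine the explicit graph description of $\Delta$ from Lemma \ref{lemDeltaDiffGraphs} with the tetrahedral symmetry of $\Delta$, so that all six segments -- which are exactly the edges of the tetrahedron on $V$ -- reduce to a single computation. First I would record two elementary facts. Since every point of $\Delta=\bar\nu(\mathbb S^2\times\mathbb S^2)$ has the form $(x\cdot y, x\cdot C, y\cdot C)$ with $x,y,C$ unit vectors, Cauchy--Schwarz gives $\Delta\subseteq[-1,1]^3$. Next, reading $F$ as a quadratic in $X$ yields the factorization $F(X,Y,Z)=-(X-f_+(Y,Z))(X-f_-(Y,Z))$, so that, combined with Lemma \ref{lemDeltaDiffGraphs}, one gets $\Delta=\{F\ge 0\}\cap[-1,1]^3$, with boundary $\partial\Delta=\{X=f_+(Y,Z)\}\cup\{X=f_-(Y,Z)\}$ over the domain $|Y|,|Z|\le 1$.

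Next I would set up the symmetry reduction. The polynomial $F$ is invariant under permutations of $(X,Y,Z)$ and under flipping the signs of any two coordinates, and the cube $[-1,1]^3$ is invariant under all of these operations; hence $\Delta=\{F\ge 0\}\cap[-1,1]^3$ is invariant under the group $G$ they generate. This $G$ has order $24$, preserves the vertex set $V$ (even sign changes and permutations each permute the four points of $V$), and acts faithfully and linearly, so it injects into $\mathrm{Sym}(V)\cong S_4$; by cardinality $G\cong S_4$, the full symmetry group of the regular tetrahedron on $V$. In particular $G$ acts transitively on the six edges, so it suffices to prove the claim for one of them.

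I would then verify a single edge directly. The segment from $(1,1,1)$ to $(1,-1,-1)$ is $\{(1,t,t):t\in[-1,1]\}$, and $f_+(t,t)=t^2+(1-t^2)=1$, so each such point satisfies $X=f_+(Y,Z)$ and therefore lies on the upper graph $\{X=f_+(Y,Z)\}\subseteq\partial\Delta$. Equivalently, $\{X=1\}$ is a supporting hyperplane of $\Delta$ because $\Delta\subseteq\{X\le 1\}$, and imposing $X=1$ forces $f_+(Y,Z)=1$, hence $Y=Z$, collapsing the intersection $\{X=1\}\cap\Delta$ to precisely this edge. Applying elements of $G$ then carries this edge to each of the remaining five, and each image edge lies in $\partial\Delta$ since $G$ preserves $\Delta$ and hence $\partial\Delta$.

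The only step I expect to require genuine care is the reduction: I must confirm that $\Delta$ itself, and not merely the variety $\{F=0\}$ or the singular set $V$, is $G$-invariant. This is exactly where the identification $\Delta=\{F\ge 0\}\cap[-1,1]^3$ earns its keep, since $\{F\ge 0\}$ alone is unbounded (for instance $F\to+\infty$ along $X=Y=Z=t$) and so the cube constraint from $\Delta\subseteq[-1,1]^3$ is essential for pinning down the correct invariant region. Everything else is a one-line algebraic check or a standard fact about the $S_4$-action on the edges of a tetrahedron.
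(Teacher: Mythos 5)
Your proof is correct and rests on the same foundation as the paper's: both reduce to Lemma \ref{lemDeltaDiffGraphs} and verify the single edge $\{(1,t,t):t\in[-1,1]\}$ via the computation $f_+(t,t)=t^2+(1-t^2)=1$ (your observation that $X=f_+(Y,Z)$ alone suffices is equivalent to the paper's check that $\tilde f_+\circ\gamma=0\ge\tilde f_-\circ\gamma$, since $f_-\le f_+$ automatically). The only difference is organizational: where the paper disposes of the remaining five edges with ``the other line segments follow similarly,'' you make this precise by identifying $\Delta=\{F\ge 0\}\cap[-1,1]^3$ and exhibiting its $S_4$-symmetry acting transitively on the edges of the tetrahedron on $V$ --- a clean and correct way to formalize that step.
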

\begin{proof}
	We will show that the line segment between $(1,1,1)$ and $(1,-1,-1)$ lies in $\partial\Delta$, with the other line segments following similarly. This line segment is parametrized by $$\gamma:[-1,1]\to\mathbb R:t\mapsto (1,t,t).$$ 
	Consider the functions $\tilde f_\pm(X,Y,Z):=f_\pm(Y,Z)-X$.
	By lemma \ref{lemDeltaDiffGraphs}, 
	$$\partial\Delta=\left\{\vec X\in\mathbb R: \tilde f_+(\vec X)=0\geq\tilde f_-(\vec X) \text{ or } \tilde f_+(\vec X)\geq 0=\tilde f_-(\vec X)\right\}.$$
	We compute
	$$\tilde f_\pm\circ\gamma(t)= t^2-1\pm(1-t^2).$$
	Thus for $t\in [-1,1]$, $\tilde f_+\circ\gamma(t)=0$ and $\tilde f_-\circ\gamma(t)=2(t^2-1)\leq 0$. This shows that $\gamma(t)\in\partial\Delta$ as required.
\end{proof}

By the previous proposition, we find that $\partial\Delta$ contains the $1$-skeleton of the regular tetrahedron with vertices $V$. However, the full tetrahedron is properly contained in $\Delta$. In Figure 1, we see that $\Delta$ is a regular tetrahedron with convexly bulging sides:

\begin{figure}
	\centering
	\includegraphics[scale=0.7]{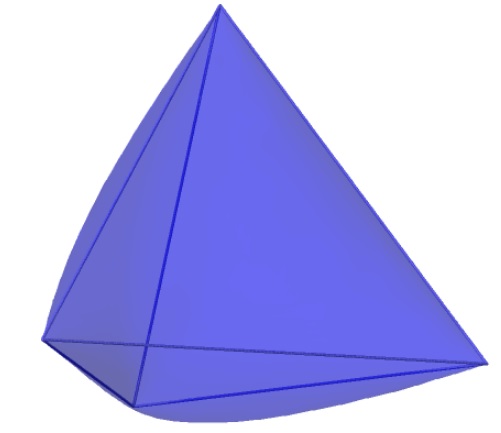}
	\caption{The multi-moment map image of $\mathbb S^3\times\mathbb S^3$}
\end{figure}

\begin{prop}
	$\bar \nu$ has three different orbit types according to the following table:
	\begin{center}\begin{tabular}{l|l|l|l}
			$\dim\Span\{x,y,C\}$ & location on $\Delta$ & $\bar \nu^{-1}(\bar \nu(x,y))$ & $\nu^{-1}(\bar \nu(x,y))$\\\hline
			1 & $V$	& $\{(x,y)\}$ & $\mathbb T^2$ \\
			2 & $\partial\Delta\backslash V$ & $\mathbb S^1$ 
			  & $\mathbb T^3$\\
			3 & $\mathring\Delta$	
			  & $\mathbb S^1 \coprod \mathbb S^1$ 
			  & $\mathbb T^3 \coprod \mathbb T^3$
	\end{tabular}\end{center}
\end{prop}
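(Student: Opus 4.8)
The plan is to read off all three columns from a single linear-algebraic object: the Gram matrix
$$G=\begin{pmatrix}1&X&Y\\ X&1&Z\\ Y&Z&1\end{pmatrix}$$
of the triple $\{x,y,C\}$, where $(X,Y,Z)=\bar\nu(x,y)$. First I would observe that $\operatorname{rank}G=\dim\Span\{x,y,C\}$ and that $\det G=2XYZ-X^2-Y^2-Z^2+1=F(X,Y,Z)$, the very polynomial cutting out $\partial\Delta$. Since a Gram matrix is positive semidefinite, $\det G>0$ on the interior and $\det G=0$ on $\partial\Delta$; this already pairs $\dim=3$ with $\mathring\Delta$ and $\dim\leq 2$ with $\partial\Delta$. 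To separate $\dim=2$ from $\dim=1$ I would note that $\operatorname{rank}G=1$ forces every $2\times 2$ principal minor $1-X^2,\,1-Y^2,\,1-Z^2$ to vanish, i.e.\ $X,Y,Z\in\{\pm 1\}$; intersecting with $F=0$ recovers exactly the four vertices $V$. This establishes the first two columns.

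For the third column I would fix $(X,Y,Z)$ and describe $\bar\nu^{-1}(X,Y,Z)$ geometrically using the decomposition of Lemma \ref{lemDeltaDiffGraphs}: writing $x=YC+x^\perp$ and $y=ZC+y^\perp$ with $x^\perp,y^\perp\in C^\perp$, the data $(X,Y,Z)$ fix the norms $\|x^\perp\|=\sqrt{1-Y^2}$, $\|y^\perp\|=\sqrt{1-Z^2}$ and the inner product $x^\perp\cdot y^\perp=X-YZ$. Determining $\bar\nu^{-1}$ thus reduces to counting pairs of vectors of prescribed lengths and prescribed angle inside the plane $C^\perp$. When the Cauchy--Schwarz inequality is strict (rank $3$, the interior) each choice of $x^\perp$ on its circle admits two reflected choices of $y^\perp$, and these two branches sweep out two disjoint circles, giving $\mathbb S^1\coprod\mathbb S^1$; when equality holds or one of $x^\perp,y^\perp$ vanishes (rank $2$, the boundary) the two branches coincide into a single $\mathbb S^1$; and when both vanish (rank $1$, at $V$) only the point $(x,y)$ remains.

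To pass from $\bar\nu$ to $\nu=\bar\nu\circ(\pi_A\times\pi_B)$ I would exploit that $\pi_A\times\pi_B$ is a $\mathbb T^2$-bundle whose fibres are the orbits of $\langle K_1,K_2\rangle$, while $K_3$ descends to the base and acts there by the simultaneous rotation $u\mapsto e^{Ct}ue^{-Ct}$ about the $C$-axis; since this rotation preserves all three inner products, its orbits are precisely the circles found above. Consequently $\nu^{-1}(\bar\nu(x,y))=(\pi_A\times\pi_B)^{-1}(\bar\nu^{-1}(\bar\nu(x,y)))$ is a $\mathbb T^2$-bundle over each component of $\bar\nu^{-1}$. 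A direct inspection of when $\lambda_1K_1+\lambda_2K_2+\lambda_3K_3=0$ shows these fields are linearly independent unless both $x=\pm C$ and $y=\pm C$: thus each component is a $3$-dimensional $\mathbb T^3$-orbit away from $V$, and collapses to a $2$-dimensional $\mathbb T^2$-orbit at $V$. Since the orbit through any point fills its whole base circle (via $K_3$) and the entire $\mathbb T^2$-fibre over each base point (via $K_1,K_2$), each connected component of $\bar\nu^{-1}$ lifts to exactly one orbit, yielding the fourth column.

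The main obstacle I anticipate is this final transition: verifying cleanly that $K_3$ descends to the $C$-rotation whose orbits match the $\bar\nu$-fibres, and then translating \emph{number of components of $\bar\nu^{-1}$} into \emph{number of $\mathbb T^3$-orbits} with the correct orbit dimension. The planar branch-count in the second paragraph (two distinct circles versus one) also needs care, since it is exactly what distinguishes $\mathring\Delta$ from $\partial\Delta$.
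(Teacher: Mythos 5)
Your proposal is correct, and while the overall strategy matches the paper's (analyse the fibres of $\bar\nu$ through the geometry of the triple $\{x,y,C\}$ in $\im\mathbb H$, then lift through the $\mathbb T^2$-bundle $\pi_A\times\pi_B$), two of your devices are genuinely different. First, the paper obtains the correspondence between $\dim\Span\{x,y,C\}$ and the location on $\Delta$ from Lemma \ref{lemDeltaDiffGraphs} together with the equality case of Cauchy--Schwarz, whereas you observe that $F(X,Y,Z)$ is exactly the Gram determinant of $\{x,y,C\}$; this single identity explains at a stroke why $F\geq 0$ on $\Delta$, why rank $3$ corresponds to $\mathring\Delta$, and why rank $1$ forces $X,Y,Z\in\{\pm 1\}$ and hence lands in $V$ --- facts the paper spreads over two earlier propositions. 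Second, for the fibre count the paper intersects two circles $S_1,S_2\subset\mathbb S^2$ centred at $x_0$ and $C$ over each point $x_0$ of a base circle, while you count pairs of planar vectors in $C^\perp$ with prescribed norms and inner product; these are the same computation in different clothes. The one point you rightly flag --- that over an interior point the double cover of the base circle consists of two disjoint circles rather than one circle traversed twice --- is settled by noting that the two branches are separated by the sign of $\det\{x,y,C\}$ (equivalently the orientation of the pair $(x^\perp,y^\perp)$ in $C^\perp$), a continuous nowhere-vanishing function on such a fibre; the paper only records this in the remark following its proof, so you should make it explicit. Your final paragraph is in fact more complete than the paper's one-line justification of the fourth column: identifying the descended $K_3$-action as the simultaneous rotation about $C$ and checking where $K_1,K_2,K_3$ become linearly dependent is what actually certifies each fibre component as a single $\mathbb T^3$ (resp.\ $\mathbb T^2$) orbit, which is what ``orbit types'' in the statement requires.
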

\begin{proof}
	Let $(x,y)\in\mathbb S^2\times\mathbb S^2$ such that $\dim\Span\{x,y,C\}\neq 1$. Then one of $x$ or $y$ is not $\pm C$. We will treat the case when $x\notin\{\pm C\}$, with the other case following similarly. 
	
	Write $\bar \nu(x,y) =\tau=(\tau_1,\tau_2,\tau_3)$. Let $(x_0,y_0)\in\bar \nu^{-1}(\bar \nu(x,y))$. Thus $\tau_2=x_0\cdot C\notin\{\pm 1\}.$ This relation defines a circle $S_0$ on $\mathbb S^2$ of possible $x$ values. For a fixed $x_0\in S_0$, the relations $\tau_1=x_0\cdot y_0$ and $\tau_3=y_0\cdot C$ define two circles $S_1$ and $S_2$ on $\mathbb S^2$ centered at $x_0$ and $C$ respectively, which intersect at possible solutions for $y_0$. Two circles can intersect in at most $2$ points. If $S_1$ and $S_2$ do not intersect, then $\bar \nu^{-1}(\tau)=\emptyset$, contradicting $\tau\in\Delta$. If $S_1$ and $S_2$ intersect at exactly one point $y_0$, then $y_0$ is a linear combination of the centers $x_0$ and $C$ of $S_1$ and $S_2$. If they intersect at two points, then each intersection point is not a linear combination of $x_0$ and $C$. Since there is a circle worth of choices for $x_0$, this gives the last two rows of the table.
	
	The remaining points in $\mathbb S^2\times\mathbb S^2$ satisfy $\dim\Span\{x,y,C\}=1$. This is equivalent to $x,y\in\{\pm C\}$, which define $4$ points. To see that these points live in different $\bar \nu$ fibres, the following table evaluates $\bar \nu$ at each of these points:
	\begin{center}\begin{tabular}{l|l|l}
			$x$ & $y$ & $\bar \nu(x,y)$ \\
			\hline
			$C$ & $C$ & (1,1,1) \\
			$C$ & $-C$ & (-1,1,-1) \\
			$-C$ & $C$ & (-1,-1,1) \\
			$-C$ & $-C$ & (1,-1,-1)
	\end{tabular}\end{center}
	Thus the singleton fibres get mapped to $V$. We've established the correspondence between the first and third rows in the claimed table. The last column follows since $\nu=\bar \nu\circ (\pi_A\times\pi_B)$, where $\pi_A\times\pi_B$ determined a $\mathbb T^2$ bundle. The second column follows from the description in lemma \ref{lemDeltaDiffGraphs}, noting that  $\partial \Delta$ consists of the points where the Cauchy-Schwarz inequality is an equality, which are the points where $\{x,y,C\}$ are linearly dependent vectors in $\im\mathbb H$.
\end{proof}
Note that $\bar \nu^{-1}\big(\mathring\Delta\big)$ has two connected components determined by the sign of $\det\{x,y,C\}$, while $\bar \nu^{-1}(\partial \Delta)$ is the vanishing locus of $\det\{x,y,C\}$. It follows that $\nu$ is a submersion along $\nu^{-1}\big(\mathring\Delta\big)$.

\section{Acknowledgments}

I'd like to thank Dr. Uwe Semmelmann for pointing me in the direction of this project. This work was done while funded by the Belgian Science Policy under the Interuniversity Attraction Pole \emph{Dynamics, Geometry, and Statistical Physics}.

\bibliography{myBib}{}
\bibliographystyle{plain}
\end{document}